\newtheorem{thm}{Theorem}
\newtheorem{cor}{Corollary}
\newtheorem{lem}{Lemma}
\newtheorem{rem}{Remark}
\newcommand{\A}{{\mathcal A}}
\newcommand{\U}{{\mathcal U}}
\newcommand{\es}{{\mathcal S}}
\newcommand{\D}{{\mathbb D}}
\newcommand{\real}{{\operatorname{Re}\,}}
\def\be{\begin{equation}}
\def\ee{\end{equation}}
\newcommand{\bee}{\begin{enumerate}}
\newcommand{\eee}{\end{enumerate}}
\newcommand{\blem}{\begin{lem}}
\newcommand{\elem}{\end{lem}}
\newcommand{\bthm}{\begin{thm}}
\newcommand{\ethm}{\end{thm}}
\newcommand{\bcor}{\begin{cor}}
\newcommand{\ecor}{\end{cor}}
\newcommand{\beg}{\begin{example}}
\newcommand{\eeg}{\end{example}}
\newcommand{\begs}{\begin{examples}}
\newcommand{\eegs}{\end{examples}}
\newcommand{\bdefe}{\begin{defin}}
\newcommand{\edefe}{\end{defin}}
\newcommand{\bprob}{\begin{prob}}
\newcommand{\eprob}{\end{prob}}
\newcommand{\bei}{\begin{itemize}}
\newcommand{\eei}{\end{itemize}}
\newcommand{\bcon}{\begin{conj}}
\newcommand{\econ}{\end{conj}}
\newcommand{\bcons}{\begin{conjs}}
\newcommand{\econs}{\end{conjs}}
\newcommand{\bprop}{\begin{propo}}
\newcommand{\eprop}{\end{propo}}
\newcommand{\br}{\begin{rem}}
\newcommand{\er}{\end{rem}}
\newcommand{\brs}{\begin{rems}}
\newcommand{\ers}{\end{rems}}
\newcommand{\bo}{\begin{obser}}
\newcommand{\eo}{\end{obser}}
\newcommand{\bos}{\begin{obsers}}
\newcommand{\eos}{\end{obsers}}
\newcommand{\bpf}{\begin{pf}}
\newcommand{\epf}{\end{pf}}
\newcommand{\ba}{\begin{array}}
\newcommand{\ea}{\end{array}}
\newcommand{\beq}{\begin{eqnarray}}
\newcommand{\beqq}{\begin{eqnarray*}}
\newcommand{\eeq}{\end{eqnarray}}
\newcommand{\eeqq}{\end{eqnarray*}}
\begin{document}
\bibliographystyle{amsplain}

\title[Hermitian Toepliz determinants fot univalent functions]{Hermitian Toepliz determinants for the class $\boldsymbol{\mathcal{S}}$ of univalent functions}

\author[M. Obradovi\'{c}]{Milutin Obradovi\'{c}}
\address{Department of Mathematics,
Faculty of Civil Engineering, University of Belgrade,
Bulevar Kralja Aleksandra 73, 11000, Belgrade, Serbia}
\email{obrad@grf.bg.ac.rs}

\author[N. Tuneski]{Nikola Tuneski$^1$}
\address{Department of Mathematics and Informatics, Faculty of Mechanical Engineering, Ss. Cyril and Methodius
University in Skopje, Karpo\v{s} II b.b., 1000 Skopje, Republic of North Macedonia.}
\email{nikola.tuneski@mf.edu.mk}



\subjclass[2000]{30C45, 30C50, 30C55}
\keywords{univalent,  Hermitian Toepliz determinant of second order,  Hermitian Toepliz determinant of third order, class $\U$, convex functions}




\begin{abstract}
Introducing a new method  we give sharp estimates of the  Hermitian Toepliz determinants of third order for the class  $\mathcal{S}$ of functions univalent in the unit disc. The new approach is also illustrated on some subclasses of the class $\es$.
\end{abstract}

\footnote{corresponding author; ORCID no.: 0000-0003-3889-0048.}

\maketitle

\section{Introduction}

Let $\mathcal{A}$ be the class of functions $f$ that are analytic  in the open unit disc $\D=\{z:|z|<1\}$ normalized such that $f(0)=f'(0)-1=0$ and let $\mathcal{S}\subset \A$ be the class of univalent functions in the unit disc $\D$ (functions that are analytic, one-on-one and onto).

\medskip

For functions $f\in \A$ of the form $f(z)=z+a_2z^2+a_3z^3+\cdots$ and positive integers $q$ and $n$, the  Toepliz matrix is defined by
\[
        T_{q,n}(f) = \left[
        \begin{array}{cccc}
        a_{n} & a_{n+1}& \ldots& a_{n+q-1}\\
        \overline{a}_{n+1}&a_{n}& \ldots& a_{n+q-2}\\
        \vdots&\vdots&~&\vdots \\
        \overline{a}_{n+q-1}& \overline{a}_{n+q-2}&\ldots&a_{n}\\
        \end{array}
        \right],
\]
where $\overline{a}_k = \overline{a_k}$. Thus, the second  Toepliz determinant is
\[
\left|T_{2,1}(f)\right|= 1-|a_2|^2
\]
and the third is
\begin{equation}\label{T3}
|H_{3,1}(f)| =  \left |
        \begin{array}{ccc}
        1 & a_2& a_3\\
        \overline{a}_2 & 1& a_2\\
        \overline{a}_3 & \overline{a}_2& 1\\
        \end{array}
        \right | = 2\, \real (a_2^2\overline{a}_3) -2|a_2|^2-|a_3|^2+1 .
\end{equation}

\medskip

The concept of  Toeplitz matrices plays an important role in functional analysis, applied mathematics as well as in physics and technical sciences (for more details see \cite{ye}).

\medskip

If $a_n$ is real, then the Toeplitz matrix $T_{q,n}(f)$ is an Hermitian one, i.e., it is equal to its conjugate transpose: $T_{q,n}(f) = \overline{[T_{q,n}(f)]^T}$. Determinants of  Hermitian matrices are real numbers.
Additionally, if $n=1$, the determinant $|T_{q,1}(f)|$ is rotationally invariant, i.e., for any real  $\theta$, the determinants $|T_{q,1}(f)|$ and $|T_{q,1}(f_{\theta})|$ of the Hermitian Toeplitz matrices of functions $f\in\A$ and $f_{\theta}(z):=e^{-i\theta}f(e^{i\theta}z)$ have same values.

\medskip

Recently, various problems of finding upper bounds, preferably sharp, of determinants of coefficients of classes of univalent functions, were rediscovered and attract significant interest. The highest focus is on the Hankel determinant and valuable references with overview of older results and the new ones are \cite{babaloa, opoola, Kowalczyk-18, kwon-1, kwon-2, lee, mishra, MONT-2019-1, MONT-2018-2, MONT-2018-1, MONT-2019-2,  selvaraj, shi, sokol, DTV-book, krishna, ye, zaprawa}.

\medskip

Naturally rises the question of finding lower and upper bound estimates of the determinant of the Hermitian Toeplitz matrices for the class of univalent functions and its subclasses. This problem was successfully solved sharp estimates in \cite{cudna} for the classes of starlike and convex functions of order $\alpha$, $0\le\alpha<1$, defined respectfully by
\[ \es^\ast(\alpha) = \left\{ f\in\A:\real\left[ \frac{zf'(z)}{f(z)} \right] >\alpha,\, z\in\D \right\}  \]
and
\[ \mathcal{C}(\alpha) = \left\{ f\in\A:\real\left[ 1+\frac{zf''(z)}{f'(z)} \right] >\alpha,\, z\in\D \right\}. \]

\medskip

For finding sharp estimates of the Hermitian Toeplitz determinant of second order it is enough to know sharp estimate for the second coefficient. The same question for the third order determinant turns out to be more complicated.

\medskip

In this paper we introduce new method for obtaining estimates of the Hermitian Toeplitz determinants of third order and receive sharp result for the general class $\es$ of univalent functions.

\medskip

We illustrate the new  method also on the class of convex functions, receiving the same sharp result as in \cite{cudna}.
In a similar manner we study classes
\[ \mathcal{U}_s(\lambda) = \left\{ f\in\mathcal{U}(\lambda): \frac{f(z)}{z}\prec \frac{1}{(1+z)(1+\lambda z)} \right\} \quad (0<\lambda\le1)\]
and
\[ \mathcal{G}(\delta) = \left\{ f\in\A:\real\left[ 1+\frac{zf''(z)}{f'(z)} \right] <1+\frac{\delta}{2},\, z\in\D \right\} \quad (0<\delta\le1), \]
where
\[ \mathcal{U}(\lambda) = \left\{ f\in\A:\left |\left (\frac{z}{f(z)} \right )^{2}f'(z)-1\right | < \lambda,\, z\in\D \right\} \quad (0<\lambda\le1)\] and "$\prec$" denotes the ususal subordination.
Class $\mathcal{U}(\lambda)$ is not included in the class of starlike functions $\es^\ast:=\es^\ast(0)$, nor vice versa (see \cite{OP-01,OP_2011}). Therefore estimates for $\es^\ast$ can not be transferred to the class $\U(\lambda)$. Sharp upper bound of the Hankel determinant of second and third order for the class $\U:=\U(1)$ are given in \cite{MONT-2018-2}.

\medskip

One can note that $\U=\U_s(1)$, since for all functions $f$ from $\U$, $\frac{z}{f(z)}\prec\frac{1}{(1+z)^2}$ (see \cite{OB-nashr}), while the general implication
\[ f\in\U(\lambda) \quad \Rightarrow \quad \frac{f(z)}{z}\prec \frac{1}{(1+z)(1+\lambda z)},\]
$0<\lambda<1$, was claimed in \cite{OPW-2016}, but  proven to be wrong  in \cite{lipon} by giving a counterexample.

\medskip

\section{Main results}

\medskip

\begin{thm}\label{th_S}
If $f\in\es$, then
\[-3\le |T_{2,1}(f)|\le 1 \quad\mbox{and}\quad -1\le |T_{3,1}(f)|\le 8. \]
All inequalities are sharp.
\end{thm}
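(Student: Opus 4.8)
The plan is to reduce the Hermitian determinant inequalities to known coefficient bounds for the class $\es$. For the second‑order determinant $|T_{2,1}(f)| = 1 - |a_2|^2$, the de Branges (Bieberbach) bound $|a_2|\le 2$ gives immediately $1 - |a_2|^2 \ge 1 - 4 = -3$, and $|a_2|\ge 0$ gives the upper bound $1$; both are attained, the lower bound by the Koebe function $k(z)=z/(1-z)^2$ (for which $a_2=2$) and the upper bound by $k(z^2)^{1/2}$ or any $f$ with $a_2=0$, so sharpness is clear.

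For the third‑order determinant I would work from the expansion
\[
|T_{3,1}(f)| = 2\,\real\!\left(a_2^2\overline{a}_3\right) - 2|a_2|^2 - |a_3|^2 + 1,
\]
and exploit rotational invariance (noted in the introduction) to normalize $a_2\ge 0$; writing $a_2 = x$ with $0\le x\le 2$ and $a_3 = u + iv$, the determinant becomes $1 - 2x^2 + 2x^2 u - u^2 - v^2 = 1 - 2x^2 + 2x^2 u - |a_3|^2$. The upper bound: complete the square in $u$ to get $1 - 2x^2 + x^4 - (u - x^2)^2 - v^2 \le 1 - 2x^2 + x^4 = (1-x^2)^2 + \text{(lower order)}$ — more precisely $\le 1 - 2x^2 + x^4 \le$ a quantity maximized at $x=2$, giving $1 - 8 + 16 = 9$; but since $a_3$ cannot be freely chosen when $a_2=2$ (rigidity of the Koebe function forces $a_3 = 3$, $v=0$, $u = 3 \ne x^2 = 4$), one must instead bound $|T_{3,1}(f)|$ using the sharp second Hankel-type region. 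I expect the cleanest route is: use $|a_3|\le 3$ together with $2\real(a_2^2\overline a_3) \le 2|a_2|^2|a_3|$ to get $|T_{3,1}(f)| \le 1 - 2x^2 + 2x^2\cdot 3 - 0 \le 1 + 4x^2 \le 17$, which is too weak, so the honest approach is the "new method" the authors advertise — presumably a parametric representation (e.g. via the Löwner chain or the logarithmic coefficients) that controls $a_2$ and $a_3$ jointly.

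The sharp bounds $-1$ and $8$ strongly suggest the extremal function is again the Koebe function $k(z) = z/(1-z)^2$ with $(a_2,a_3) = (2,3)$: plugging in gives $2\cdot 4\cdot 3 - 2\cdot 4 - 9 + 1 = 24 - 8 - 9 + 1 = 8$, confirming the upper bound is attained there; and for the lower bound one wants $2x^2 u$ as negative as possible, i.e. $u<0$, which points to $f(z) = z/(1+z^2) $ or a function with $a_2 = 0$, $a_3 = -1$, giving $0 - 0 - 1 + 1 = 0$ — not $-1$ — so the true minimizer has $a_2\ne 0$; solving the constrained minimization of $1 - 2x^2 + 2x^2 u - u^2 - v^2$ over the coefficient body $\mathcal{V} = \{(a_2,a_3): f\in\es\}$ will identify it. Thus the key steps are: (i) normalize $a_2\ge 0$; (ii) describe or invoke the sharp coefficient region $\mathcal V$ for $(a_2,a_3)$ over $\es$ (this is classical — the region is cut out by $|a_2|\le 2$ and a Grunsky/Goluzin-type inequality relating $a_3 - a_2^2$ to $a_2$); (iii) optimize the explicit real polynomial on $\mathcal V$; (iv) exhibit extremal functions.

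\medskip

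The main obstacle, I expect, is step (ii)–(iii): the set of admissible $(a_2,a_3)$ for the full class $\es$ is not a box, and the determinant is a genuine quadratic in $\real a_3$ whose optimum may lie in the interior rather than at a vertex, so one cannot simply plug in extreme coefficient values. Controlling $\real(a_2^2\overline a_3)$ and $|a_3|$ simultaneously — rather than bounding them separately, which loses the sharp constants — is the crux, and this is presumably exactly where the paper's "new method" does its work; I would try to realize it by parametrizing $f\in\es$ through its first few Löwner–Kufarev coefficients (writing $a_2 = 2\kappa_1$, $a_3 = \kappa_1^2 + \kappa_2$ type relations with $|\kappa_1|\le 1$ and a constraint on $\kappa_2$), turning the problem into a concrete two–variable real optimization.
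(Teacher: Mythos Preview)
Your treatment of $|T_{2,1}(f)|$ is fine and matches the paper. For $|T_{3,1}(f)|$, however, both the upper and the lower bound arguments have genuine gaps, and the ``new method'' you anticipate is far more elementary than L\"owner chains or the full coefficient body.

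\textbf{Upper bound.} You already write down the correct inequality $2\,\real(a_2^2\overline a_3)\le 2|a_2|^2|a_3|$, but you then discard the term $-|a_3|^2$ (replacing it by $0$) and separately use $|a_3|\le 3$, which is why you land at $17$. The point is to keep $-|a_3|^2$ and view
\[
|T_{3,1}(f)|\le -t^2+2|a_2|^2 t -2|a_2|^2+1=:\varphi(t),\qquad t=|a_3|\in[0,3],
\]
as a downward parabola in $t$ with vertex at $t=|a_2|^2$. If $|a_2|^2\le 3$ the vertex lies in range and $\max\varphi=(|a_2|^2-1)^2\le 4$; if $3\le|a_2|^2\le 4$ the maximum is at $t=3$, giving $\varphi(3)=4|a_2|^2-8\le 8$. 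Hence $|T_{3,1}(f)|\le 8$, attained by the Koebe function (your check $24-8-9+1=8$ confirms this). No rigidity or joint coefficient region is needed.

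\textbf{Lower bound.} The single classical inequality you are missing is $|a_3-a_2^2|\le 1$ for $f\in\es$ (this is standard; see e.g.\ Thomas--Tuneski--Vasudevarao, \emph{Univalent Functions: A Primer}). Expanding $|a_3-a_2^2|^2\le 1$ gives
\[
2\,\real(a_2^2\overline a_3)\ge |a_2|^4+|a_3|^2-1,
\]
and substituting into the determinant yields
\[
|T_{3,1}(f)|\ge |a_2|^4-2|a_2|^2+1-1=(|a_2|^2-1)^2-1\ge -1.
\]
Equality holds for $f_1(z)=z/(1-z+z^2)=z+z^2+0\cdot z^3-\cdots$, where $a_2=1$, $a_3=0$, giving $|T_{3,1}(f_1)|=0-2-0+1=-1$. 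Your search among $a_2=0$ functions could not find this because the minimizer has $a_2=1$.

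In short: the paper's ``new method'' is exactly the pair of moves (i) bound $2\,\real(a_2^2\overline a_3)$ above by $2|a_2|^2|a_3|$ and optimize the resulting quadratic in $|a_3|$, and (ii) bound it below via $|a_3-a_2^2|\le 1$. Both steps use only $|a_2|\le 2$, $|a_3|\le 3$, and $|a_3-a_2^2|\le 1$; no parametric representation is required.
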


\begin{proof}
From the Bieberbach's theorem (\cite{biebe}) we have $|a_2|\le2$ for all functions from $\es$ with Koebe's function $k(z)=\frac{z}{(1-z)^2}=\sum_{k=1}^\infty kz^k$ as an extremal one. Now both estimates for $|T_{2,1}(f)|$ directly follow, together with their sharpness.

\medskip

We continue with study of the third Toepliz determinant.

\medskip

Since for the class $\es$, $|a_3-a_2^2|\le1$ (see \cite[p.5]{DTV-book}), then
\begin{equation}\label{eq-re}
|a_2|^4+|a_3|^2-2\,\real(a_2^2\overline{a_3}) = |a_3-a_2^2|^2 \le1,
\end{equation}
and from here
\[ 2\,\real(a_2^2\overline{a_3}) \ge  |a_2|^4+|a_3|^2- 1. \]
Now, by using \eqref{T3} we have
\[ |T_{3,1}(f)| \ge (|a_2|^2-1)^2-1\ge-1, \]
which is sharp as the function $f_1(z)=\frac{z}{1-z+z^2} = z+z^2-z^4-\cdots$ shows.

\medskip
As for the upper bound of $|T_{3,1}(f)|$, from \eqref{T3}, by using that $\real\left(a_{2}^{2}a_{3}\right)\leq|a_{2}|^{2}|a_{3}|$, we obtain
\[
|T_{3,1}(f)| \le -|a_3|^2 + 2|a_2|^2|a_3| -2|a_2|^2 +1 =: \varphi(|a_3|),
\]
where
\[
\varphi(t) = -t^2+2|a_2|^2t-2|a_2|^2+1 \quad\mbox{and}\quad 0\le t=|a_3| \le 3.
\]
We need to find $\max\varphi(t)$ for $t\in[0,3]$.

\medskip

In that sense we have two cases.

\medskip
The first one is $0\le|a_2|^2\le3$, i.e., $0\le|a_2|\le\sqrt3$, when
\[ \max\varphi(t)=\varphi(|a_2|^2) = (|a_2|^2-1)^2\le4.\]

\medskip

The second case is $3\le|a_2|^2\le4$, i.e., $\sqrt3\le|a_2|\le\sqrt2$, when
\[ \max\varphi(t)=\varphi(3) = 4|a_2|^2-8\le8.\]

\medskip
Therefore, $\max\varphi(t)=8$, when $t\in[0,3]$.

\medskip

The result is sharp as the Koebe function $k(z)$ shows.
\end{proof}

\medskip

\begin{rem}\label{rem1}$ $
\begin{itemize}
  \item[$(i)$] The same result as in Theorem \ref{th_S} holds for the class $\es^\ast=\es^\ast(0)$ (see Corollary 1 and Corollary 3 from \cite{cudna}).
  \item[$(ii)$] The same result as in Theorem \ref{th_S} holds for the class $\U=\U(1)$ since $\U\subset\es$ and both extremal functions $f_1$ and $k$ belong to $\U$.
\end{itemize}
\end{rem}

\medskip

\begin{rem}\label{rem-2}
It is evident that for applying the method used in the proof of Theorem \ref{th_S} on other classes of univalent functions it is enough to know the sharp estimates of $|a_2|$, $|a_3|$ and $|a_3-a_2^2|$ and apply them on
\begin{equation}\label{TD2}
|T_{2,1}(f)|=1-|a_2|^2;
\end{equation}
and on
\begin{equation}\label{TD3}
 |T_{3,1}(f)| \le -|a_3|^2 + 2|a_2|^2|a_3| -2|a_2|^2 +1 =: \varphi(|a_3|),
\end{equation}
where $\varphi(t) = -t^2+2|a_2|^2t-2|a_2|^2+1$ and $t=|a_3|$.
\end{rem}

\medskip

In the sense of Remark \ref{rem-2}, for the class $\U_s(\lambda)$, using the sharp estimates
\begin{equation}\label{uc}
|a_2|\le 1+\lambda,\quad |a_3|\le 1+\lambda+\lambda^2\quad \mbox{and}\quad |a_3-a_2^2|\le\lambda,
\end{equation}
given in \cite{OPW-0}, \cite{OPW-1} and \cite{lipon}, we receive the following theorem.  Estimate $|a_2|\le 1+\lambda$ is sharp on the whole class $\U(\lambda)$.

\medskip

\begin{thm}
If $f\in\U(\lambda)$, then
\[-\lambda(2+\lambda)\le |T_{2,1}(f)|\le 1, \]
and if additionally $f\in\U_s(\lambda)$, then
\[
-\lambda^2\le |T_{3,1}(f)|\le
\left\{\begin{array}{cr}
1, & 0\le\lambda\le\lambda_0 \\
\lambda^2(1+\lambda)(3+\lambda), & \lambda_0\le\lambda\le1
\end{array}
\right.,
\]
where $\lambda_0=0.44762\ldots$ is the positive real root of the equation
\[\lambda^2(1+\lambda)(3+\lambda)-1=0.\]
All inequalities are sharp.
\end{thm}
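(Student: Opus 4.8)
The plan is to follow the scheme of Remark~\ref{rem-2}: substitute the sharp estimates \eqref{uc} into \eqref{TD2} and \eqref{TD3}, optimise, and then identify extremal functions. For the second determinant, $|T_{2,1}(f)|=1-|a_2|^2$ on $\U(\lambda)$, and since $|a_2|\le 1+\lambda$ is sharp on all of $\U(\lambda)$ we get at once $-\lambda(2+\lambda)=1-(1+\lambda)^2\le|T_{2,1}(f)|\le 1$. The upper bound is attained by $f(z)=z$ (trivially in $\U_s(\lambda)$), and the lower bound by $k_\lambda(z):=\frac{z}{(1+z)(1+\lambda z)}$, for which $a_2=-(1+\lambda)$; here $k_\lambda\in\U(\lambda)$ because $(z/k_\lambda(z))^2k_\lambda'(z)-1=-\lambda z^2$, and $f(z)/z\prec 1/((1+z)(1+\lambda z))$ holds with the Schwarz function $\omega(z)=z$.

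For the lower bound of $|T_{3,1}|$, just as in the proof of Theorem~\ref{th_S}, expanding $|a_3-a_2^2|^2$ and using \eqref{T3} gives $|T_{3,1}(f)|=(|a_2|^2-1)^2-|a_3-a_2^2|^2$, whence $|a_3-a_2^2|\le\lambda$ (valid on $\U_s(\lambda)$ by \eqref{uc}) yields $|T_{3,1}(f)|\ge-\lambda^2$. Equality needs $|a_2|=1$ and $|a_3-a_2^2|=\lambda$ together, and I would take $f_1(z)=\frac{z}{1-z+\lambda z^2}$, whose coefficients $a_2=1$, $a_3=1-\lambda$ give $|T_{3,1}(f_1)|=-\lambda^2$. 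It then remains to check $f_1\in\U_s(\lambda)$: membership in $\U(\lambda)$ follows from $(z/f_1(z))^2f_1'(z)-1=-\lambda z^2$, while $\frac{f_1(z)}{z}=\frac{1}{1-z+\lambda z^2}\prec\frac{1}{(1+z)(1+\lambda z)}$ is verified by solving $(1+\omega)(1+\lambda\omega)=1-z+\lambda z^2$ for the branch with $\omega(0)=0$, namely
\[
\omega(z)=\frac{-(1+\lambda)+\sqrt{(1+\lambda)^2-4\lambda z+4\lambda^2z^2}}{2\lambda},
\]
and confirming that this $\omega$ is analytic in $\D$ (its radicand vanishes only at points of modulus $\frac{1+\lambda}{2\lambda}\ge1$) and satisfies $|\omega(z)|<1$ there.

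For the upper bound of $|T_{3,1}|$, I would insert $|a_2|\le1+\lambda$ and $|a_3|\le1+\lambda+\lambda^2$ into \eqref{TD3} and maximise $\varphi(t)=-t^2+2|a_2|^2t-2|a_2|^2+1$ over $t=|a_3|\in[0,1+\lambda+\lambda^2]$, with $|a_2|^2\in[0,(1+\lambda)^2]$ regarded as a parameter. The vertex of this downward parabola is at $t=|a_2|^2$, and since $(1+\lambda)^2>1+\lambda+\lambda^2$ two cases occur. If $|a_2|^2\le1+\lambda+\lambda^2$, the maximum in $t$ is $\varphi(|a_2|^2)=(|a_2|^2-1)^2$, which over the admissible range of $|a_2|^2$ does not exceed $\max\{1,\lambda^2(1+\lambda)^2\}$. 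If $1+\lambda+\lambda^2<|a_2|^2\le(1+\lambda)^2$, then $\varphi$ is increasing on the $t$-interval, so its maximum is $\varphi(1+\lambda+\lambda^2)$, which increases with $|a_2|^2$ and is therefore largest at $|a_2|^2=(1+\lambda)^2$, where completing the square gives $\lambda^2[(2+\lambda)^2-1]=\lambda^2(1+\lambda)(3+\lambda)$. Since $(1+\lambda)(3+\lambda)\ge(1+\lambda)^2$, the conclusion is $|T_{3,1}(f)|\le\max\{1,\lambda^2(1+\lambda)(3+\lambda)\}$, and comparing the two numbers reduces to $\lambda^2(1+\lambda)(3+\lambda)=1$, i.e. $\lambda^4+4\lambda^3+3\lambda^2-1=0$; the left-hand side is strictly increasing on $(0,1]$ with the single root $\lambda_0=0.44762\ldots$, so the bound equals $1$ for $0\le\lambda\le\lambda_0$ and $\lambda^2(1+\lambda)(3+\lambda)$ for $\lambda_0\le\lambda\le1$. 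Sharpness: $f(z)=z$ realises the value $1$, and $k_\lambda$, with $a_2=-(1+\lambda)$ and $a_3=1+\lambda+\lambda^2$, realises $\lambda^2(1+\lambda)(3+\lambda)$ (compute via \eqref{T3}).

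The step I expect to be the main obstacle is the upper-bound optimisation: one has to notice that the crude estimate $(|a_2|^2-1)^2$ — which by itself would give the too-large value $\lambda^2(2+\lambda)^2$ — must be tightened using the active constraint $|a_3|\le1+\lambda+\lambda^2$, producing $\lambda^2(1+\lambda)(3+\lambda)$ instead. A secondary, more technical point is confirming, for sharpness of the lower bound $-\lambda^2$, that $f_1$ lies in $\U_s(\lambda)$, i.e. that the subordinating function $\omega$ displayed above maps $\D$ into $\D$.
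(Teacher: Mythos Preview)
Your proposal is correct and follows essentially the same route as the paper: the estimates \eqref{uc} plugged into \eqref{TD2}--\eqref{TD3}, the same two-case split on $|a_2|^2$ versus $1+\lambda+\lambda^2$, and the same extremal functions (your $k_\lambda$ and $f_1$ are rotations of the paper's $f_4$ and $f_2$). Two small points where you are in fact more careful than the paper: you state the exact identity $|T_{3,1}(f)|=(|a_2|^2-1)^2-|a_3-a_2^2|^2$ (the paper only uses the corresponding inequality), and you make explicit that $\varphi(1+\lambda+\lambda^2)$ still depends on $|a_2|^2$ and must be maximised at $|a_2|^2=(1+\lambda)^2$, which the paper leaves implicit. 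The subordination check for $f_1\in\U_s(\lambda)$ that you flag as unfinished is likewise not carried out in the paper, which only verifies that $f_2$ is analytic in $\D$.
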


\begin{proof}
The estimates of the second Hermitian Toepliz determinant follow directly from \eqref{TD2} and \eqref{uc} and they are sharp due to the functions $f_3(z)=z$ and
\[f_4(z)=\frac{z}{1-(1+\lambda)z+\lambda z^2} = \frac{z}{(1-z)(1-\lambda z)} = z+(1+\lambda)z^2+(1+\lambda+\lambda^2)z^3+\cdots.\]

\medskip

For the lower bound of the third Hermitian Toepliz determinant, from \eqref{TD3} and \eqref{uc},  we have
\[ |T_{3,1}(f)| \ge \left(|a_2|^2-1\right)^2 - \lambda^2 \ge -\lambda^2,\]
with sharpness for the function $f_2(z)=\frac{z}{1-z+\lambda z^2}=z+z^2+(1-\lambda)z^3+\cdots$. Function $f_2$ is analytic on $\D$ since $1-z+\lambda z^2$ equals zero on the unit disk only when $\lambda=0$ and $\lambda=-2$.

\medskip

For the upper bound of  $|T_{3,1}(f)|$ we consider two cases.

\medskip

In the first one, when $0\le|a_2|^2\le 1+\lambda+\lambda^2$, the vertex of the parabola $\varphi(t)$ is obtained for $t=|a_2|^2$ and lies in the range of $t=|a_3|$. So,
\[
\begin{split}
|T_{3,1}(f)|&\le \max\varphi(t) =\varphi(|a_2|^2) = \left( |a_2|^2-1 \right)^2\\
& \le
\left\{\begin{array}{cc}
1, & |a_2|^2\le2 \,\,\,(\Leftrightarrow 0<\lambda\le\frac{\sqrt5-1}{2}) \\
\lambda^2(1+\lambda)^2, & 2\le|a_2|^2\le 1+\lambda+\lambda^2 \,\,\,(\Leftrightarrow \frac{\sqrt5-1}{2}\le\lambda\le1)
\end{array}
\right..
\end{split}
\]

\medskip

Similarly, in the second case, $1+\lambda+\lambda^2 \le |a_2|^2\le (1+\lambda)^2$,  we have that the vertex lies on the right of the range of $t=|a_3|$. Thus
\[
|T_{3,1}(f)| \le \max\varphi(t) =\varphi(1+\lambda+\lambda^2) = \lambda^2(1+\lambda)(3+\lambda).
\]

\medskip

By using all these facts, we conclude that
\[
|T_{3,1}(f)| \le
\left\{\begin{array}{cc}
1, &  0<\lambda\le\lambda_0 \\
\lambda^2(1+\lambda)(3+\lambda), & \lambda_0 \le\lambda\le1
\end{array}
\right.,
\]
where $\lambda_0=0.44762\ldots$ is the positive real root of the equation
\[\lambda^2(1+\lambda)(3+\lambda)-1=0.\]

\medskip

The upper bound of the third order determinant is also sharp with extremal function $f_3$ when $0<\lambda\le\lambda_0$ and $f_4$ when $\lambda_0 \le\lambda\le1$.
\end{proof}

\medskip

For $\lambda=1$ we receive the following corollary with the same estimates as for the class $\es$ already discussed in Remark \ref{rem1}($ii$).

\medskip

\begin{cor}
If $f\in\U\equiv\U(1)$, then $-3\le |T_{2,1}(f)|\le 1$, and if $f\in\U_s\equiv\U_s(1)$, then $-1\le |T_{3,1}(f)|\le 8$. All inequalities are sharp.

\end{cor}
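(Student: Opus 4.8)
The plan is to read the corollary off the preceding theorem by setting $\lambda=1$; no fresh coefficient estimates are required, since the bounds in \eqref{uc} at $\lambda=1$ reduce to $|a_2|\le 2$, $|a_3|\le 3$, $|a_3-a_2^2|\le 1$, which are exactly the ingredients already used for $\es$ in Theorem \ref{th_S}. First I would substitute $\lambda=1$ into the second-order bounds of the theorem: since $-\lambda(2+\lambda)$ equals $-3$ at $\lambda=1$, the inequality $-\lambda(2+\lambda)\le|T_{2,1}(f)|\le 1$ becomes $-3\le|T_{2,1}(f)|\le 1$ for $f\in\U=\U(1)$, and the extremal functions $f_3(z)=z$ and $f_4(z)=z/(1-z)^2$ (the Koebe function) both lie in $\U$, so the bounds remain sharp.

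Next I would do the same for the third-order determinant. Putting $\lambda=1$ into the lower bound $-\lambda^2\le|T_{3,1}(f)|$ gives $-1\le|T_{3,1}(f)|$ for $f\in\U_s=\U_s(1)=\U$, with equality for $f_2(z)=z/(1-z+z^2)$, which for $\lambda=1$ is the function $f_1$ of Theorem \ref{th_S} and lies in $\U$. For the upper bound the only thing to check is which branch of the piecewise expression is active at $\lambda=1$: since the polynomial $\lambda^2(1+\lambda)(3+\lambda)-1$ equals $-1$ at $\lambda=0$ and $7$ at $\lambda=1$ and is increasing on $[0,\infty)$, its unique positive root $\lambda_0=0.44762\ldots$ satisfies $\lambda_0<1$, so $\lambda=1$ falls in the range $\lambda_0\le\lambda\le 1$, and the bound is $\lambda^2(1+\lambda)(3+\lambda)=1\cdot 2\cdot 4=8$, attained by the Koebe function $f_4$, which belongs to $\U$. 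Hence $-1\le|T_{3,1}(f)|\le 8$, with all inequalities sharp.

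There is essentially no obstacle here: every step is a specialization already justified by the theorem and by Remark \ref{rem1}($ii$). The only mildly nontrivial point is the numerical check $\lambda_0<1$, which guarantees that at $\lambda=1$ we use the second branch of the upper bound and therefore obtain $8$ rather than $1$.
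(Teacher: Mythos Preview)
Your proposal is correct and follows exactly the paper's approach: the corollary is stated without a separate proof, merely as the specialization $\lambda=1$ of the preceding theorem, with the sharpness already covered by Remark~\ref{rem1}($ii$). Your explicit verification that $\lambda_0<1$ (so the second branch of the upper bound applies and yields $8$) and your identification of the extremal functions $f_2$, $f_3$, $f_4$ at $\lambda=1$ with $f_1$, $z$, and the Koebe function are precisely what the paper leaves implicit.
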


\medskip

We conclude with two more applictions of Remark \ref{rem-2}.

\begin{thm}
If $f\in\mathcal{C}:=\mathcal{C}(0)$, then $0\le |T_{3,1}(f)|\le1$. The estimate is sharp.
\end{thm}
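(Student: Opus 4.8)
The plan is to follow Remark~\ref{rem-2}, but with one twist: I would first turn \eqref{eq-re} into the exact identity it really is. For every $f\in\A$, substituting $2\real(a_2^2\overline{a_3})=|a_2|^4+|a_3|^2-|a_3-a_2^2|^2$ into \eqref{T3} gives
\[
|T_{3,1}(f)| = \bigl(1-|a_2|^2\bigr)^2 - \bigl|a_3-a_2^2\bigr|^2 .
\]
So the whole problem collapses to sharp bounds for $|a_2|$ and $|a_3-a_2^2|$ on $\mathcal{C}$. For convex $f$ all coefficients satisfy $|a_n|\le1$, so $(1-|a_2|^2)^2\le1$ and the upper bound $|T_{3,1}(f)|\le1$ is immediate. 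The lower bound is the real content: the plain sharp estimate $|a_3-a_2^2|\le\tfrac13$ is \emph{not} enough (it would only give $|T_{3,1}(f)|\ge-\tfrac19$), so I would instead establish the $|a_2|$-dependent refinement
\[
\bigl|a_3-a_2^2\bigr|\le \tfrac13\bigl(1-|a_2|^2\bigr),
\]
which is sharp for every admissible value of $|a_2|$.

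To get this refinement I would write $p(z):=1+zf''(z)/f'(z)=1+c_1z+c_2z^2+\cdots$, a Carath\'eodory function, so $|c_n|\le2$. Comparing coefficients in $zf''(z)=(p(z)-1)f'(z)$ gives $a_2=c_1/2$ and $a_3=(c_1^2+c_2)/6$, hence $a_3-a_2^2=\tfrac1{12}(2c_2-c_1^2)$. Since $|T_{3,1}(f)|$, $|a_2|$ and $|a_3-a_2^2|$ are all invariant under $f\mapsto f_\theta$, I may assume $a_2\ge0$, i.e.\ $c_1\in[0,2]$. Now the standard representation $2c_2=c_1^2+(4-c_1^2)\zeta$, $|\zeta|\le1$, gives $a_3-a_2^2=\tfrac1{12}(4-c_1^2)\zeta$ and therefore $|a_3-a_2^2|\le\tfrac1{12}(4-c_1^2)=\tfrac13(1-|a_2|^2)$.

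Feeding both estimates into the identity,
\[
0\le \tfrac89\bigl(1-|a_2|^2\bigr)^2 = \bigl(1-|a_2|^2\bigr)^2-\tfrac19\bigl(1-|a_2|^2\bigr)^2 \le |T_{3,1}(f)| \le \bigl(1-|a_2|^2\bigr)^2 \le 1 ,
\]
which is the assertion. For sharpness I would exhibit $f(z)=z/(1-z)$ (convex, since its image is a half-plane) with $a_2=a_3=1$, so that $|T_{3,1}(f)|=0$, and $f(z)=z\in\mathcal{C}$ with $|T_{3,1}(f)|=1$. The one point requiring care is the refinement of the Fekete--Szeg\H{o}-type bound: recognizing that $|a_3-a_2^2|\le\tfrac13$ must be sharpened to its $|a_2|$-dependent form is what makes the lower bound come out as $0$ rather than $-\tfrac19$; everything else is the short algebra above.
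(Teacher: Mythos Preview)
Your proof is correct and follows essentially the same route as the paper: both use Trimble's inequality $|a_3-a_2^2|\le\tfrac13(1-|a_2|^2)$ for the lower bound and $|a_2|\le1$ for the upper bound, with the same extremal functions $z/(1-z)$ and $z$. The only differences are cosmetic: you make the identity $|T_{3,1}(f)|=(1-|a_2|^2)^2-|a_3-a_2^2|^2$ explicit (the paper leaves it implicit in its $\varphi$-function framework) and you re-derive Trimble's bound via the Carath\'eodory parametrization rather than citing \cite{trimble}.
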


\begin{proof}
For the class $\mathcal{C}$ of convex functions we know that
  \[|a_3-a_2^2|\le \frac13(1-|a_2|^2)\]
  (see \cite{trimble}). So, from \eqref{TD3} we have
\[  |T_{3,1}(f)| \ge \frac89 \left(1-|a_2|^2\right)^2  \ge 0. \]
The function $f_5(z)=\frac{z}{1-z}=z+z^2+z^3+\cdots$ shows that this result is sharp.

\medskip

On the other hand side, since $0\le |a_2|\le1=\max|a_3|$, we have
\[ |T_{3,1}(f)| \le \max\varphi(t) = \varphi(|a_2|^2) = (|a_2|^2-1)^2 \le 1,\]
with equality for $f_3(z)=z.$

\medskip

Therefore, $0\le |T_{3,1}(f)|\le1$ which is the same result as in Corollary 6 from \cite{cudna}.
\end{proof}

\medskip

\begin{thm}
If $f\in\mathcal{G}:=\mathcal{G}(1)$ we have sharp estimates $\frac12 \le |T_{3,1}(f)| \le 1$.
\end{thm}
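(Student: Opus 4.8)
The plan is to follow the scheme of Remark~\ref{rem-2}: assemble the sharp estimates of $|a_2|$, $|a_3|$ and of $|a_3-a_2^2|$ for the class $\mathcal{G}=\mathcal{G}(1)$, feed them into \eqref{TD3} for the upper bound, and into the identity $|T_{3,1}(f)|=(|a_2|^2-1)^2-|a_3-a_2^2|^2$ (obtained from \eqref{T3} via $2\real(a_2^2\overline{a_3})=|a_2|^4+|a_3|^2-|a_3-a_2^2|^2$, and already used for the lower bounds in Theorem~\ref{th_S} and for the class $\mathcal{C}$) for the lower bound. To get the coefficient data I would put $P(z):=3-2\bigl(1+\tfrac{zf''(z)}{f'(z)}\bigr)$: membership $f\in\mathcal{G}(1)$ is exactly $\real P(z)>0$ on $\D$, and $P(0)=1$, so $P(z)=1+c_1z+c_2z^2+\cdots$ is a Carath\'eodory function with $|c_1|\le2$, $|c_2|\le2$ and $|c_2-\tfrac12c_1^2|\le2-\tfrac12|c_1|^2$. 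Comparing power series gives $a_2=-\tfrac14c_1$ and $a_3=\tfrac1{24}c_1^2-\tfrac1{12}c_2$, so $|a_2|\le\tfrac12$ and $|a_3|=\tfrac1{12}\bigl|c_2-\tfrac12c_1^2\bigr|\le\tfrac16$; moreover $a_3-a_2^2=-\tfrac1{16}c_1^2-\tfrac1{24}\bigl(c_2-\tfrac12c_1^2\bigr)$, and with $|c_1|=4|a_2|$ this yields the sharp estimate $|a_3-a_2^2|\le\tfrac16+\tfrac13|a_2|^2=\tfrac{1+2|a_2|^2}{6}$. (These are classical facts about $\mathcal{G}(\delta)$ and may alternatively be quoted.)

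For the upper bound I would substitute $0\le|a_3|\le\tfrac16$ and $0\le|a_2|^2\le\tfrac14$ into $\varphi$ from \eqref{TD3}. The vertex $t=|a_2|^2$ of the parabola $\varphi$ lies at or below $\tfrac14$, so two cases occur: if $0\le|a_2|^2\le\tfrac16$ then $\max\varphi=\varphi(|a_2|^2)=(|a_2|^2-1)^2\le1$, while if $\tfrac16\le|a_2|^2\le\tfrac14$ then $\varphi$ increases on $[0,\tfrac16]$ and $\max\varphi=\varphi(\tfrac16)=\tfrac{35}{36}-\tfrac53|a_2|^2\le\tfrac{25}{36}$. In either case $|T_{3,1}(f)|\le1$, and equality forces $a_2=a_3=0$, realized by $f_3(z)=z\in\mathcal{G}$. (Alternatively, the identity gives at once $|T_{3,1}(f)|=(1-|a_2|^2)^2-|a_3-a_2^2|^2\le(1-|a_2|^2)^2\le1$, because $|a_2|\le\tfrac12$.)

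For the lower bound, the identity and the estimate for $|a_3-a_2^2|$ give
\[
|T_{3,1}(f)|\ \ge\ (1-|a_2|^2)^2-\Bigl(\tfrac{1+2|a_2|^2}{6}\Bigr)^{2}\ =:\ g\bigl(|a_2|^2\bigr),\qquad |a_2|^2\in\bigl[0,\tfrac14\bigr].
\]
A short computation shows $g'<0$ on $[0,\tfrac14]$, so $g$ attains its minimum at the right endpoint $|a_2|^2=\tfrac14$, where $g(\tfrac14)=\tfrac9{16}-\tfrac1{16}=\tfrac12$. Sharpness is witnessed by $f(z)=z-\tfrac12z^2$: integrating the relation $1+\tfrac{zf''(z)}{f'(z)}=\tfrac{1-2z}{1-z}=2-\tfrac1{1-z}$ shows $f\in\mathcal{G}$ because $\real\tfrac1{1-z}>\tfrac12$ on $\D$, and $a_2=-\tfrac12$, $a_3=0$ give $|T_{3,1}(f)|=1-2|a_2|^2=\tfrac12$.

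The only somewhat delicate point is the sharp inequality $|a_3-a_2^2|\le\tfrac{1+2|a_2|^2}{6}$: one has to use the correct second-coefficient estimate for Carath\'eodory functions so that equality holds precisely at $|a_2|=\tfrac12$, which is exactly the endpoint where $g$ attains its minimum and where the extremal $z-\tfrac12z^2$ lives. Everything after that is elementary one-variable calculus, just as in the preceding theorems of the paper.
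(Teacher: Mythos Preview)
Your upper-bound argument via $\varphi$ is identical to the paper's, and your parenthetical alternative through the identity $|T_{3,1}(f)|=(1-|a_2|^2)^2-|a_3-a_2^2|^2\le(1-|a_2|^2)^2\le1$ is cleaner still. For the lower bound both you and the paper use that same identity; the difference is that the paper simply inserts the constant estimate $|a_3-a_2^2|\le\tfrac14$ (quoted from \cite{OPW-0}), obtaining $(1-|a_2|^2)^2-\tfrac1{16}\ge(\tfrac34)^2-\tfrac1{16}=\tfrac12$ in one line, whereas you derive the $|a_2|$-dependent refinement $|a_3-a_2^2|\le\tfrac{1+2|a_2|^2}{6}$ and then minimize the resulting function $g$. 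Your refinement is correct and sharp, but unnecessary here: both bounds coincide at the endpoint $|a_2|^2=\tfrac14$ where the minimum is attained anyway, so the added precision gains nothing and the paper's route is shorter. One slip to correct: the decomposition should read $a_3-a_2^2=-\tfrac1{16}c_1^2-\tfrac1{12}\bigl(c_2-\tfrac12c_1^2\bigr)$, not $-\tfrac1{24}(\cdots)$; your stated final bound $\tfrac{1+2|a_2|^2}{6}$ is nevertheless the right one.
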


\begin{proof}
For the class $\mathcal{G}$ we have
  \[ |a_2|\le \frac12,\quad |a_3|\le \frac16\quad \mbox{and}\quad |a_3-a_2^2|\le\frac14 \]
  (see \cite{OPW-0}). Then
  \[  |T_{3,1}(f)| \ge \left(1-|a_2|^2\right)^2 -\frac{1}{16} \ge \left(\frac34\right)^2 -\frac{1}{16}=\frac12. \]
The result is sharp as the function $f_6(z)=z-\frac12z^2$ shows.

\medskip

As for the upper bound, for $0\le |a_2|^2\le\frac16=\max|a_3|$ we have
\[ \max\varphi(t) = \varphi(|a_2|^2) = (|a_2|^2-1)^2 \le 1,\]
while for $\frac16\le |a_2|^2\le\frac14$,
\[ \max\varphi(t) = \varphi\left(\frac16\right) = \frac{35}{36}-\frac53|a_2|^2\le \frac{35}{36}-\frac53\cdot\frac16=\frac{25}{36},\]
which implies that $|T_{3,1}(f)| \le \max\varphi(t)=1 $ for $0\le t=|a_3|\le\frac16$ . The result is sharp for $f_3(z)=z$.

\medskip

This result can be easily generalized on the class $\mathcal{G}(\delta)$ using the sharp estimates require for the method given in \cite{OPW-0}.
\end{proof}

\medskip

\end{document}